\theoremstyle{plain}
\newtheorem{lem}{Lemma}[section]
\newtheorem{thm}[lem]{Theorem}
\theoremstyle{definition}
\newtheorem{rem}[lem]{Remark}
\newtheorem{defn}[lem]{Definition}
\numberwithin{equation}{section} \numberwithin{figure}{section}
\numberwithin{table}{section} \thispagestyle{empty}
\begin{document}
\title{A note on connectivity of splitting matroids}

%\author{ P. P. Malavadkar, M. M. Shikare and S. B. Dhotre \\E-mail: pmalavadkar@gmail.com\\ mmshikare@gmail.com, dsantosh2@yahoo.co.in }

\author{S. B. Dhotre\\ Department of Mathematics\\ Savitribai Phule Pune University, Pune-411007 (India)\\
	E-mail: dsantosh2@yahoo.co.in \\ and \\  P. P. Malavadkar\\ Department of Mathematics\\ MIT World Peace University, Pune-411038 (India)\\ E-mail: prashant.malavadkar@mitwpu.edu.in}

\date{}

\maketitle
%\begin{center}**Department of Mathematics\\ University of Pune, Pune-411007 (India)\\E-mail: dsantosh2@yahoo.co.in\\*MIT World Peace University, Pune-411038 (India)\\ E-mail: prashant.malavadkar@mitwpu.edu.in\end{center}

\begin{abstract}
\noindent Fleischner introduced the idea of splitting a vertex of degree at least three in a connected graph and used the operation to characterize Eulerian graphs. Raghunathan et. al. extended the splitting operation from graphs to binary matroids. It has been studied that splitting operation, in general, may not preserve the connectedness of the binary matroid. Interestingly, it is true that the splitting matroid of a disconnected matroid may be connected.  In this paper, we characterize the binary disconnected matroids whose splitting matroid is connected.
\end{abstract}

\noindent {\bf AMS Subject Classification}: 05B35\\
{\bf Keywords}: {\it Graph; binary matroid; splitting operation; connected binary matroid}

\normalsize

\section {Introduction }
\noindent  Fleischner \cite{FS} introduced the idea of splitting a vertex of degree at least three in a connected graph and used the operation to characterize Eulerian graphs. In fact Fleischner \cite{FS} proved
the following result concerning the connectedness of a connected graph after splitting operation.

\begin{lem} {\bf (Splitting Lemma) :} Let $G$ be a connected bridgeless
	graph. Suppose $v\in V(G)$ with $d(v)>3$ and $x, y, z$ are the
	edges incident at $v$. Form the graph $G_{x, y}$ and $G_{x, z}$ by
	splitting away the pairs $x, y$ and $x, z$ respectively, and
	assume $x$ and $z$ belong to different blocks if $v$ is a cut vertex
	of $G$. Then either $G_{x, y}$ or $G_{x, z}$ is connected and
	bridgeless.\end{lem}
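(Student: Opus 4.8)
\noindent The plan is to reduce the statement to a single implication and then to analyze small edge cuts of $G$. Write ``splitting away the pair $\{a,b\}$ at $v$'' as the operation that creates a new degree-$2$ vertex $v_0$ carrying $a$ and $b$ while $v$ keeps its remaining edges, and let $G_{a,b}$ denote the result; then $G$ is recovered from $G_{a,b}$ by identifying $v$ with $v_0$. Since $G$ is connected and bridgeless, every block of $G$ is $2$-edge-connected, so every block meeting $v$ contributes at least $2$ to $d(v)$. First I would settle connectivity: $G_{a,b}$ is disconnected exactly when $v$ is a cut vertex and $\{a,b\}$ is precisely the set of edges at $v$ lying in some block $B$ of $G$. (In that case $(B-v)\cup\{v_0\}$ is cut off; conversely, if $v$ is not a cut vertex then $G-v$ is connected and both $v_0$ and $v$ --- the latter through its $\geq 2$ surviving edges, so $d(v)>3$ is used here --- attach to it, while if $v$ is a cut vertex but no block's $v$-edge-set lies inside $\{a,b\}$, every block still meets $v$.) Because the hypothesis puts $x$ and $z$ in different blocks whenever $v$ is a cut vertex, $\{x,z\}$ is never the full $v$-edge-set of a block, so $G_{x,z}$ is always connected, whereas $G_{x,y}$ can fail to be connected only in the degenerate configuration where $v$ is a cut vertex and $\{x,y\}$ is the full $v$-edge-set of a block $B$ (and then $z\notin B$). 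Hence it suffices to prove: if $G_{x,z}$ has a bridge, then $G_{x,y}$ is connected and bridgeless; for $G_{x,z}$ itself serves whenever it happens to be bridgeless.

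\noindent The next ingredient is a bridge criterion. Since $G$ arises from $G_{a,b}$ by identifying $v$ with $v_0$ and $G$ is bridgeless, an edge $f$ is a bridge of $G_{a,b}$ if and only if $v$ and $v_0$ lie in different components of $G_{a,b}-f$; in particular the bridges of $G_{a,b}$ lie ``in series'' on the $v$--$v_0$ path of its block tree. Untangling the identification yields: if $G_{a,b}$ is connected, then no bridge of $G_{a,b}$ lies in $\{a,b\}$ (such a bridge would make $\{a,b\}$ a $2$-edge-cut of $G$ with both edges at $v$, i.e.\ the degenerate configuration, under which $G_{a,b}$ is already disconnected), and every bridge $f$ of $G_{a,b}$ produces a minimal $3$-edge-cut $\{f,a,b\}$ of $G$ (both sides connected), with $v$ on one side and the far endpoints of $a$ and $b$ on the other.

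\noindent Now the heart of the argument. Assume $G_{x,z}$ has a bridge $f$ but $G_{x,y}$ is not connected-and-bridgeless. If $G_{x,y}$ is disconnected we are in the degenerate configuration, so $\{x,y\}$ is the $v$-edge-set of a block $B$ and $z$ lies in another block $B'$; a direct check --- splitting away $\{x,z\}$ is then a ``good'' split of the two-block graph $B\cup B'$, producing a $2$-edge-connected piece glued at the single vertex $v$ to the bridgeless remainder of $G$ --- shows $G_{x,z}$ is bridgeless, contradicting that $f$ is a bridge. So $G_{x,y}$ is connected with a bridge $e$. By the bridge criterion, $\{e,x,y\}$ and $\{f,x,z\}$ are minimal $3$-edge-cuts of $G$. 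Passing to the still-connected graph $H:=G-x$, the pairs $\{e,y\}$ and $\{f,z\}$ become $2$-edge-cuts of $H$. Using that $y$ and $z$ are incident to $v$ while $e$ and $f$ are not, I would show these two $2$-edge-cuts must cross, partitioning $V(H)$ into four nonempty cells $A_1,A_2,A_3,A_4$ with $v$ in the common cell $A_1$; then $x,y,z$ occupy prescribed cell-pairs ($x$ a chord of type $A_1A_3$ in $G$, with $y$ of type $A_1A_4$ and $z$ of type $A_1A_2$), and each of $e,f$ has only two possible positions. In every position but one, some cell is left with at most one incident edge of $G$, contradicting connectedness or bridgelessness of $G$; in the remaining configuration --- the four cells forming a $4$-cycle together with the chord $x$ --- one checks that $v$ is forced to be a cut vertex and that $x$ and $z$ lie in a common block of $G$, contradicting the hypothesis. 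Any degenerate coincidence among $e,f,x,y,z$ (for instance $e=z$ or $f=y$) either simplifies this bookkeeping or collapses to the case where $\{x,y,z\}$ is a $3$-edge-cut of $G$ at $v$, which again forces $x$ and $z$ into one block.

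\noindent The main obstacle is precisely this final case analysis: controlling the positions of the two bridges inside the four-cell picture and, above all, eliminating the ``cycle of cells'' configuration, which is defeated only by invoking the assumption that $x$ and $z$ lie in different blocks. I also expect the disconnected sub-case --- the ``good split of a two-block graph'' computation --- to require its own short, hands-on verification rather than falling out of the general cut argument.
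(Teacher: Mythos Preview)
The paper does not prove this lemma at all: it is quoted verbatim from Fleischner's monograph \cite{FS} as background, with no argument supplied. So there is no ``paper's own proof'' to compare your proposal against.

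As a standalone attempt, your reduction is sound: establishing that $G_{x,z}$ is always connected under the block hypothesis, and then reducing to ``$G_{x,z}$ has a bridge $\Rightarrow$ $G_{x,y}$ is connected and bridgeless,'' is a clean way to organise the case split, and your bridge criterion (bridges of $G_{a,b}$ correspond to minimal $3$-edge-cuts $\{f,a,b\}$ of $G$ separating $v$ from the far ends of $a,b$) is correct. The part that is genuinely incomplete is exactly what you flag: the four-cell crossing-cut analysis. You assert that the two $2$-edge-cuts $\{e,y\}$ and $\{f,z\}$ of $H=G-x$ must cross, but two $2$-edge-cuts of a connected graph need not cross in general, so you would need to argue this from the specific incidence structure (both $y$ and $z$ meet $v$; neither $e$ nor $f$ does). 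Likewise the claim that ``in every position but one some cell has at most one incident edge'' and the elimination of the cycle-of-cells configuration are stated rather than verified. Fleischner's original proof proceeds somewhat differently, via an analysis of how the block structure at $v$ interacts with the split, and does not set up a symmetric four-cell picture; your edge-cut approach is more combinatorial in flavour and could be made to work, but the endgame you sketch still needs to be written out in full before it counts as a proof.
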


%For the standard terminology in graphs and matroids we refer to \cite{Oxley}.
Raghunathan, Shikare and Waphare  \cite{RSWS}  extended the splitting operation from graphs to binary matroids. This operation is defined for a pair of elements of a binary  matroid in the following way:
\begin{defn} Let $M$ be a binary matroid on a set $E$ and $A$ be a matrix over $GF(2)$ that represents the matroid $M$. Consider elements $x$ and $y$ of
$M$. Let $A_{x,y}$ be the matrix that is obtained by adjoining an
extra row to $A$ with this row  being zero everywhere except in
the columns corresponding to $x$ and $y$ where it takes the value
$1$. Let $M_{x,y}$ be the matroid represented by  the matrix
$A_{x,y}$. We say that $M_{x,y}$ is obtained from $M$ by
splitting the pair of elements $x$ and $y$. Moreover, the
transition from $M$ to $M_{x,y}$ is called the {\it splitting
operation}. The two elements $x$ and $y$ of the matroid $M_{x,y}$
are now in series. \end{defn}

\begin{lem} \label{Lemma 1.3}\cite{RSWS}. Let $M$ be a binary  matroid and $x,y\in
	E(M).$  Then \\
	(i)~$ M_{x, y} = M $ if and only if $ x$ and $y$ are in series in $M;$\\
	(ii)~$x$ and $y$ are in series in $M_{x,y};$\\
	(iii)~if $x$ and $y$ are not in series in $M$ then, $r'(M_{x, y}) = r(M) + 1,$
	where $r$ and $r'$ are rank functions of $M$ and $ M_{x, y}$, respectively; and\\
	(iv)~for any $ X\subseteq E(M),$ $ r(X) \leq r'(X)\leq r(X) + 1.$ \end{lem}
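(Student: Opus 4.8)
\noindent\emph{Proof proposal.} The plan is to argue entirely with a fixed $GF(2)$-matrix $A$ representing $M$ on ground set $E$, using the standard dictionary between $M$ and its representation: the row space $\mathcal R(A)\subseteq GF(2)^{E}$ is the cocircuit space of $M$, so the cocircuits of $M$ are exactly the minimal (under inclusion) supports of nonzero vectors of $\mathcal R(A)$; in particular the support of every nonzero vector of $\mathcal R(A)$ contains a cocircuit, and the unit vector $\mathbf{e}$ lies in $\mathcal R(A)$ if and only if $e$ is a coloop of $M$. I write $w=\mathbf{x}+\mathbf{y}$ for the row adjoined to $A$ in forming $A_{x,y}$, so that $\mathcal R(A_{x,y})=\mathcal R(A)+\langle w\rangle\supseteq\mathcal R(A)$. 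Throughout I assume $x\neq y$ and that neither $x$ nor $y$ is a coloop of $M$, which is the case of genuine interest; if $x$ or $y$ is a coloop the adjoined row may already lie in $\mathcal R(A)$ for a trivial reason, and that degenerate situation is set aside.

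I would dispose of (iv) first, since it is purely linear algebra: for $X\subseteq E$ the matrix $A_{x,y}|_{X}$ is $A|_{X}$ with one extra coordinate row, and forgetting that coordinate is a linear map carrying the column space of $A_{x,y}|_{X}$ onto that of $A|_{X}$ with kernel of dimension at most one; hence $r(X)\le r'(X)\le r(X)+1$.

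The crux is the single claim that \emph{$w\in\mathcal R(A)$ if and only if $x$ and $y$ are in series in $M$}. One direction is immediate: if $\{x,y\}$ is a cocircuit, then $w$, its incidence vector, lies in the cocircuit space $\mathcal R(A)$. For the converse, suppose $w\in\mathcal R(A)$; being nonzero, its support $\{x,y\}$ contains a cocircuit $C^{*}$ of $M$, and since neither $\{x\}$ nor $\{y\}$ is a cocircuit (neither $x$ nor $y$ is a coloop) we must have $C^{*}=\{x,y\}$, i.e.\ $x$ and $y$ are in series. Given the claim, (i) follows: if $x,y$ are in series then $w\in\mathcal R(A)$, the adjoined row of $A_{x,y}$ is redundant, and $A_{x,y}$ and $A$ represent the same matroid, so $M_{x,y}=M$; conversely, if $M_{x,y}=M$ then $A_{x,y}$ and $A$ have equal rank, which together with $\mathcal R(A)\subseteq\mathcal R(A_{x,y})$ gives $\mathcal R(A_{x,y})=\mathcal R(A)$, hence $w\in\mathcal R(A)$ and $x,y$ are in series. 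And (iii) follows: if $x,y$ are not in series then $w\notin\mathcal R(A)$, so $A_{x,y}$ has strictly larger rank than $A$, and combining this with (iv) at $X=E$ forces $r'(M_{x,y})=r(M)+1$.

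For (ii) I would observe that in $A_{x,y}$ the vector $w$ is literally one of the rows, so $w\in\mathcal R(A_{x,y})$ and therefore $\{x,y\}$ contains a cocircuit of $M_{x,y}$; the only nonempty candidates are $\{x\}$, $\{y\}$, and $\{x,y\}$. Now $\{x\}$ is a cocircuit of $M_{x,y}$ exactly when $\mathbf{x}\in\mathcal R(A_{x,y})=\mathcal R(A)+\langle w\rangle$, i.e.\ exactly when $\mathbf{x}\in\mathcal R(A)$ or $\mathbf{y}=\mathbf{x}+w\in\mathcal R(A)$, i.e.\ exactly when $x$ or $y$ is a coloop of $M$, which is excluded; the same argument rules out $\{y\}$. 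Hence $\{x,y\}$ is itself a cocircuit of $M_{x,y}$, so $x$ and $y$ are in series in $M_{x,y}$. The only place where any real care is needed is the converse half of the italicised claim --- forbidding $\{x,y\}$ from containing a smaller cocircuit --- which is exactly the step that consumes the no-coloop hypothesis; everything else is the row-space/cocircuit-space dictionary together with a one-line rank count.
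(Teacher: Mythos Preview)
The paper does not give its own proof of this lemma; it is quoted with a citation to \cite{RSWS} and used thereafter as a black box, so there is no in-paper argument to compare against.

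On its own merits your argument is correct. The row-space/cocircuit-space dictionary over $GF(2)$ is the right framework, and once you isolate the single claim that $w=\mathbf{x}+\mathbf{y}\in\mathcal R(A)$ if and only if $\{x,y\}$ is a $2$-cocircuit of $M$, all four parts drop out: (iv) is the one-row rank bound, (i) and (iii) are the dichotomy $w\in\mathcal R(A)$ versus $w\notin\mathcal R(A)$, and (ii) is the observation that $w$ is literally a row of $A_{x,y}$ together with the exclusion of $\{x\}$ and $\{y\}$ as cocircuits.

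One remark on the hypothesis you set aside. You are right that the statement tacitly requires neither $x$ nor $y$ to be a coloop, and in fact (i) and (ii) are simply false without it: if both $x$ and $y$ are coloops of $M$ then $\mathbf{x},\mathbf{y}\in\mathcal R(A)$, so $w\in\mathcal R(A)$ and $M_{x,y}=M$, yet $\{x,y\}$ is not a cocircuit because $\{x\}$ already is; and if $x$ alone is a coloop then $\mathbf{x}\in\mathcal R(A)\subseteq\mathcal R(A_{x,y})$, so $x$ stays a coloop in $M_{x,y}$ and (ii) fails. The paper does impose ``loopless and coloopless'' as a standing assumption a little further on, which covers this. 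It would tighten your write-up to state outright that the coloop case genuinely breaks the lemma rather than describing it as a degeneracy you choose to ignore.
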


\begin{lem}\label{Lemma 1.4}\cite{AM}.  Let $M$ be a binary matroid
	and let $x,y\in E(M).$ If $C^*$ is a cocircuit of $M$ containing both
	$x$ and $y$ with $ |C^*| \geq 3,$ then $C^{*}-\{x,y\}$ is a cocircuit of
	$M_{x,y}.$\end{lem}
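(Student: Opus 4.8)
The plan is to argue linear-algebraically, via the standard description of the cocircuits of a binary matroid: if $M$ is represented over $GF(2)$ by a matrix $A$, then the cocycle space of $M$ is the row space of $A$, the cocircuits of $M$ are exactly the minimal nonempty supports of nonzero vectors in that space, and, in particular, every nonzero cocycle of $M$ contains a cocircuit of $M$. I will also use the elementary fact that the cocircuits of any matroid form an antichain under inclusion. With these in hand, the lemma reduces to a short computation with the augmented matrix $A_{x,y}$.

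First I would fix a representation $A$ of $M$ and let $e$ denote the row adjoined to $A$ to form $A_{x,y}$, so that $e$ has support $\{x,y\}$ and the cocycle space of $M_{x,y}$ is $\{\,w+\varepsilon e : w \text{ a cocycle of } M,\ \varepsilon\in GF(2)\,\}$. Since $C^*$ is a cocircuit of $M$ containing $x$ and $y$, there is a cocycle $w$ of $M$ with $\mathrm{supp}(w)=C^*$, and this $w$ has a $1$ in the coordinates $x$ and $y$; hence $w+e$ is a cocycle of $M_{x,y}$ with support $C^*\setminus\{x,y\}$. Because $|C^*|\ge 3$ and $x,y\in C^*$, this set is nonempty, so $C^*\setminus\{x,y\}$ is a nonzero cocycle of $M_{x,y}$ and therefore contains some cocircuit $D^*$ of $M_{x,y}$. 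It then remains to prove that $D^*=C^*\setminus\{x,y\}$.

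That last step is where I expect the real work to lie. I would write $D^*=\mathrm{supp}(v)$ for a cocycle $v$ of $M_{x,y}$ and express $v=v'+\varepsilon e$ with $v'$ a cocycle of $M$ and $\varepsilon\in\{0,1\}$, then split into two cases. If $\varepsilon=0$, then $v=v'$ is a nonzero cocycle of $M$ with support $D^*\subseteq C^*\setminus\{x,y\}\subsetneq C^*$, so $D^*$ contains a cocircuit of $M$ lying strictly inside $C^*$, contradicting the antichain property; hence $\varepsilon=1$. If $\varepsilon=1$, then since $x,y\notin D^*$ the vector $v$ vanishes at the coordinates $x$ and $y$, and because $e$ is $1$ there, $v'$ must be $1$ at $x$ and $y$; consequently $\mathrm{supp}(v')=D^*\cup\{x,y\}$, which (as $D^*\subseteq C^*\setminus\{x,y\}$) is contained in $C^*$. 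The support of the nonzero cocycle $v'$ therefore contains a cocircuit $E^*$ of $M$ with $E^*\subseteq C^*$, so $E^*=C^*$ by the antichain property, forcing $\mathrm{supp}(v')=C^*$; since $x,y\notin D^*$ this yields $D^*=C^*\setminus\{x,y\}$, as required.

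As an alternative one could avoid matrices and instead verify directly that $(E(M)\setminus C^*)\cup\{x,y\}$ is a hyperplane of $M_{x,y}$, using the rank estimates in Lemma~\ref{Lemma 1.3}; however, the rank bookkeeping there is more delicate than the cocycle-space computation above, so I would present the linear-algebra version.
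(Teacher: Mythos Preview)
The paper does not supply a proof of this lemma; it is simply quoted from Mills~\cite{AM} as a known result. Your argument is correct and self-contained: passing to the cocycle (row) space of the representing matrix, exhibiting $w+e$ as a cocycle of $M_{x,y}$ with support $C^*\setminus\{x,y\}$, and then verifying minimality by the two-case analysis on $\varepsilon$ together with the antichain property of cocircuits is exactly the right approach for binary matroids. There is nothing in the paper to compare against, but your proof would stand on its own.
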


Shikare, Azadi and Waphare \cite{SAWGS} further generalized this operation and defined the splitting operation for arbitrary set of elements in a binary matroid. The generalized  operation is defined in the following way.

\begin{defn}\label{defngs} Let $M = M[A]$ be a binary matroid with ground set $E$ and suppose $X$ is a subset of $E$.  Let $A_X$ be the matrix obtained from $A$  by adjoining an extra row to $A$ with this row being zero everywhere except in the columns corresponding to the elements of $X$ where it takes the value 1.  The splitting matroid $M_X$ is defined to be the vector matroid of the matrix $A_X$. The	transition from $M$ to $M_X$ is called a  {\it generalized splitting operation}.\end{defn}

Let $M$ be a matroid and $X\subset E(M)$. We assume that $M$ is loopless and coloopless. The set of circuits of $M$ is denoted by $\cal C$$(M)$. We call a circuit of $M$ as an {\it $OX$-circuit} if it contains an odd number of elements of the set $X$. Using Definition \ref{defngs}, Shikare, Azadi and Waphare \cite{SAWGS} characterized the circuits of the splitting matroid $M_X$.

\begin{lem} Let $M$ be a binary matroid on $E$ and suppose $X \subseteq E$.  Then $\cal C$$(M_X) = $ $\cal C$$_0	\cup $ $\cal C$$_1$ where\\
\begin{description}
\item $\cal C$$_0 = \{ C \in \cal C$$(M)~ |~ C$ contains an even number of elements of $X$ \}; and  \\
\item $\cal C$$_1$ = The set of minimal members of $\{C_1 \cup C_2 ~|~ C_1, C_2 \in $$\cal C$$(M), C_1 \cap C_2 = \phi$ and each of $C_1$ and $C_2$ is an $OX$-circuit such that $C_1 \cup C_2$ contains no member  of $\cal C$$_0\}$.
\end{description} \end{lem}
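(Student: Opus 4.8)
The plan is to argue directly with the $GF(2)$-representations. Let $z\in GF(2)^{E}$ be the incidence vector of $X$, so that $A_X$ is obtained from $A$ by appending the row $z$. Then the cycle space of $M_X$ (the $GF(2)$-span of the incidence vectors of its circuits, equivalently the null space of $A_X$) is exactly the cycle space of $M$ intersected with the hyperplane $\{v:z\cdot v=0\}$; in words, a subset $S\subseteq E$ is dependent in $M_X$ if and only if it is dependent in $M$ and $|S\cap X|$ is even. Invoking the standard fact that every nonzero member of the cycle space of a binary matroid is a disjoint union of circuits, I would represent each nonzero member of the cycle space of $M$ as a set $C_{1}\cup\cdots\cup C_{k}$ with $C_{1},\dots,C_{k}\in\mathcal{C}(M)$ pairwise disjoint; then ``$|C_{1}\cup\cdots\cup C_{k}|$ even on $X$'' says exactly that an even number of the $C_{i}$ are $OX$-circuits. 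Circuits of $M_X$ are the minimal nonempty such sets, and the whole proof amounts to identifying them.

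First I would establish $\mathcal{C}_{0}\cup\mathcal{C}_{1}\subseteq\mathcal{C}(M_X)$. If $C\in\mathcal{C}_{0}$, then $C$ is dependent in $M_X$ (it is dependent in $M$ and meets $X$ evenly), while every proper subset of $C$ is independent in $M$ and hence in $M_X$; so $C$ is a circuit of $M_X$. If $C=C_{1}\cup C_{2}$ is a minimal member of the family defining $\mathcal{C}_{1}$, then $C$ meets $X$ evenly and is dependent in $M$, hence dependent in $M_X$; were it not minimal it would properly contain a circuit $D$ of $M_X$, and by the reverse inclusion (proved independently below) $D\in\mathcal{C}_{0}\cup\mathcal{C}_{1}$, which is impossible --- $D\in\mathcal{C}_{0}$ contradicts that $C$ contains no member of $\mathcal{C}_{0}$, and $D\in\mathcal{C}_{1}$ puts a member of the family strictly inside $C$, against minimality.

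For the reverse inclusion, take $C\in\mathcal{C}(M_X)$; it is dependent in $M$, so it contains some $C'\in\mathcal{C}(M)$. If $|C'\cap X|$ is even, then $C'$ is dependent in $M_X$, and minimality of $C$ forces $C=C'\in\mathcal{C}_{0}$. If $|C'\cap X|$ is odd, then, since $|C\cap X|$ is even, $C\neq C'$ and $C\setminus C'=C\,\triangle\,C'$ is a nonempty member of the cycle space of $M$ meeting $X$ in an odd number of elements; hence its decomposition into disjoint circuits of $M$ contains an $OX$-circuit $C''\subseteq C\setminus C'$, which is disjoint from $C'$. Then $C'\cup C''$ is a disjoint union of two $OX$-circuits, so it is dependent in $M_X$ and contained in $C$, whence $C=C'\cup C''$ by minimality. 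Also $C$ contains no member $D$ of $\mathcal{C}_{0}$, for such a $D$ would be dependent in $M_X$ and contained in $C$, forcing $D=C$, impossible since $C$ properly contains $C'$ and so is not a circuit of $M$. Thus $C=C'\cup C''$ lies in the family defining $\mathcal{C}_{1}$, and it is minimal there because any family member strictly inside $C$ would be dependent in $M_X$ and strictly inside the circuit $C$. So $C\in\mathcal{C}_{1}$.

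The routine ingredients are the null-space description of $M_X$ and the repeated ``proper subsets of a circuit stay independent'' remarks. The main obstacle is the reverse inclusion: producing the second disjoint $OX$-circuit $C''$ inside $C\setminus C'$, and then simultaneously deducing $C=C'\cup C''$ and verifying that $C$ contains no member of $\mathcal{C}_{0}$. One must also be careful about the logical order, since the inclusion $\mathcal{C}_{1}\subseteq\mathcal{C}(M_X)$ relies on the reverse inclusion; this is harmless as long as the reverse inclusion is carried out (as above) without any reference to $\mathcal{C}_{1}$.
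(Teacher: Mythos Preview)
The paper does not supply a proof of this lemma; it is quoted from Shikare, Azadi and Waphare without argument. So there is no in-paper proof to compare against, and your linear-algebra approach via the null space of $A_X$ is a standard and essentially correct route to the result.

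One point needs tightening. Your paraphrase ``a subset $S\subseteq E$ is dependent in $M_X$ if and only if it is dependent in $M$ and $|S\cap X|$ is even'' is false as stated: take $S=C\cup\{e\}$ with $C\in\mathcal{C}_0$ and $e\in X\setminus C$; then $S$ is dependent in $M_X$ (it contains $C$) but $|S\cap X|$ is odd. What is true, and what your argument actually needs, is that a set lies in the \emph{cycle space} of $M_X$ iff it lies in the cycle space of $M$ and meets $X$ evenly. In every place where you invoke the sloppy version the set in question is a circuit of $M$ or a disjoint union of two circuits of $M$, hence genuinely a cycle-space element, so your conclusions survive; just replace ``dependent in $M$'' by ``in the cycle space of $M$'' in those justifications. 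The logical ordering you flag---establishing $\mathcal{C}(M_X)\subseteq\mathcal{C}_0\cup\mathcal{C}_1$ first, independently, and then using it to verify $\mathcal{C}_1\subseteq\mathcal{C}(M_X)$---is handled correctly.
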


In the following lemma, Shikare, Azadi and Waphare \cite{SAWGS} characterized the rank function of the matroid $M_X$ in terms of the rank function of the matroid $M$.

\begin{lem} \label{rank} Let $r$ and $r'$ be the rank functions of the matroids $M$ and $M_X$, respectively. Suppose $A \subseteq E(M)$. Then
	\begin{eqnarray}
	r'(A) &=& r(A) + 1 \mbox{ if $A$ contains an $OX$-circuit of $M$; and}\\
	&=& r(A) \mbox{ if $A$ contains no $OX$-circuit of $M$. }
	\end{eqnarray}
\end{lem}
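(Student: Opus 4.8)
The plan is to derive both the bound $r'(A)\le r(A)+1$ and the exact value of $r'(A)$ from the circuit description of $M_X$ in the preceding lemma, treating the two cases of the statement separately. First, note that for every $A\subseteq E(M)$ the column submatrix of $A_X$ indexed by $A$ is obtained from the corresponding column submatrix of $A$ by appending a single row over $GF(2)$; since appending a row raises the rank by at most one, $r(A)\le r'(A)\le r(A)+1$ (the analogue, for the generalized operation, of Lemma~\ref{Lemma 1.3}(iv)). Thus it remains to decide, in each case, which of the two values occurs, and for this I will use the circuit lemma together with the elementary fact that the circuits of a restriction $N|A$ are exactly the circuits of $N$ contained in $A$.

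\emph{Case 1: $A$ contains no $OX$-circuit of $M$.} Here I claim $M_X|A=M|A$, which gives $r'(A)=r(A)$. Indeed, a circuit of $M_X$ contained in $A$ belongs to $\mathcal C_0$ or to $\mathcal C_1$. It cannot belong to $\mathcal C_1$: a member of $\mathcal C_1$ contains two disjoint $OX$-circuits $C_1,C_2$ of $M$, and were it a subset of $A$ these would be $OX$-circuits of $M$ lying in $A$, contrary to hypothesis. So every circuit of $M_X$ in $A$ is a circuit of $M$; conversely, every circuit $C$ of $M$ with $C\subseteq A$ satisfies $|C\cap X|$ even (as $A$ has no $OX$-circuit), hence lies in $\mathcal C_0$ and is a circuit of $M_X$. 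Therefore $M_X$ and $M$ have the same circuits contained in $A$, so $M_X|A=M|A$ and $r'(A)=r(A)$.

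\emph{Case 2: $A$ contains an $OX$-circuit $C$ of $M$.} I would pick $e\in C$, extend $C\setminus\{e\}$ to a basis $B$ of $M|A$, and look at $B\cup\{e\}\subseteq A$, which has size $r(A)+1$. Since $B$ is independent in $M$ and $C\subseteq B\cup\{e\}$, the unique circuit of $M$ contained in $B\cup\{e\}$ is $C$. The circuit lemma, applied to the subsets of $B\cup\{e\}$, then shows that $B\cup\{e\}$ contains no circuit of $M_X$: no member of $\mathcal C_0$ lies there, since the only $M$-circuit there is $C$ and $C$ is an $OX$-circuit; and no member of $\mathcal C_1$ lies there, since that would require two disjoint $M$-circuits inside $B\cup\{e\}$, whereas there is only one. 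Hence $B\cup\{e\}$ is independent in $M_X$, so $r'(A)\ge r(A)+1$; with the upper bound this gives $r'(A)=r(A)+1$.

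\emph{Comments on the difficulty.} There is no single hard step here — the argument is a careful application of the circuit lemma to the subsets of $A$ (and of $B\cup\{e\}$) — and neither case needs the ``loopless and coloopless'' hypothesis; the only matroid-theoretic inputs are uniqueness of the fundamental circuit of an element over an independent set and the circuit characterization of a restriction. If one prefers to avoid the circuit lemma, there is a purely linear-algebraic route: $r'(A)-r(A)$ equals $1$ exactly when the appended row restricted to the columns of $A$, namely the characteristic vector $\chi_{X\cap A}$, lies outside the $GF(2)$-row space of the columns of $A$; that row space is the cocycle space of $M|A$, whose orthogonal complement is spanned by the characteristic vectors $\chi_C$ of the circuits $C\subseteq A$ of $M$, so $\chi_{X\cap A}$ lies outside it precisely when $\langle\chi_{X\cap A},\chi_C\rangle=|X\cap C|\bmod 2$ is nonzero for some such $C$ — that is, precisely when $A$ contains an $OX$-circuit.
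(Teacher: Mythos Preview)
The paper does not supply its own proof of this lemma; it is quoted from Shikare, Azadi and Waphare \cite{SAWGS} as a known result, so there is no in-paper argument to compare against. Your proof is correct. The bound $r(A)\le r'(A)\le r(A)+1$ is immediate from the one-row extension, Case~1 is handled by showing $M_X|A$ and $M|A$ have identical circuit sets, and in Case~2 the key point---that the fundamental circuit of $e$ over the basis $B$ of $M|A$ must coincide with $C$, so $B\cup\{e\}$ contains exactly one $M$-circuit and therefore neither a $\mathcal C_0$-member nor a $\mathcal C_1$-member---is exactly what is needed to exhibit an independent set of size $r(A)+1$ in $M_X|A$. The linear-algebraic alternative you sketch (via the cycle/cocycle orthogonality over $GF(2)$) is also valid and arguably closer in spirit to the matrix definition of the generalized splitting.
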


%\noindent \textbf{Remark }: It follows from lemma \ref{rank} that $r(M) \leq r'(M_X) \leq r(M) + 1$.\\

The concept of $n$-connection for matroids was introduced by W. T. Tutte  based upon the corresponding idea for graphs (see \cite{TutteCM}).   
The splitting operation, in general, does not preserve the connectedness of the binary matroid. Several results concerning splitting operation have been explored in \cite{BDCSM, AM, SSL, SABS}.

In the next result Shikare \cite{SSL} provided a sufficient condition for the splitting operation to yield a connected binary matroid from a $4$-connected binary matroid.
\begin{thm} Let $M$ be a $4$-connected binary matroid with $|E(M)|\geq 9$ and let $x, y$ be distinct elements of $M$. Then $M_{x,y}$ is  connected binary matroid. \end{thm}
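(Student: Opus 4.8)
The plan is to argue by contradiction: from a hypothetical disconnection of $M_{x,y}$ I will extract either a $1$- or $2$-separation of $M$, or a cocircuit of $M$ with at most three elements --- each of which is impossible for a $4$-connected matroid. The first step is a standard observation: a $4$-connected matroid $M$ with $|E(M)|\ge 9$ has no cocircuit of size at most $3$. Indeed, if $C^{*}$ is a cocircuit with $|C^{*}|\le 3$ and $H=E(M)\setminus C^{*}$ is the complementary hyperplane, then, writing $\lambda_{M}(A)=r(A)+r(E(M)\setminus A)-r(M)$ for the connectivity function, one has $\lambda_{M}(C^{*})=r(C^{*})-1\le |C^{*}|-1$ while $|H|=|E(M)|-|C^{*}|\ge 6\ge |C^{*}|$, so $(C^{*},H)$ is a $k$-separation of $M$ with $k=|C^{*}|\le 3$, a contradiction. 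In particular $\{x,y\}$ is not a cocircuit of $M$, so $x$ and $y$ are not in series in $M$; hence by Lemma~\ref{Lemma 1.3}(iii), $r'(M_{x,y})=r(M)+1$, while by Lemma~\ref{Lemma 1.3}(ii) the set $\{x,y\}$ is a cocircuit of $M_{x,y}$. Finally, $M_{x,y}$ is loopless, since $M$ is.

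Next I would suppose that $M_{x,y}$ is disconnected and write $M_{x,y}=N_{1}\oplus N_{2}$ with $E_{i}=E(N_{i})\ne\emptyset$, the indices chosen so that $x,y\in E_{1}$; this is possible because a cocircuit is contained in a single connected component, and it forces $|E_{1}|\ge 2$. Since every $OX$-circuit of $M$ contains exactly one of $x,y$, no $OX$-circuit of $M$ lies inside $E_{2}$, and therefore $r'(E_{2})=r(E_{2})$ by Lemma~\ref{rank}; also $r'(E_{1})\le r(E_{1})+1$ by Lemma~\ref{rank}. From $r'(E_{1})+r'(E_{2})=r'(M_{x,y})=r(M)+1$ we obtain $\lambda_{M}(E_{1})=r(E_{1})+r(E_{2})-r(M)\in\{0,1\}$. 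If $\lambda_{M}(E_{1})=0$, then $M=(M|E_{1})\oplus(M|E_{2})$ is disconnected, contradicting $4$-connectivity. If $\lambda_{M}(E_{1})=1$, then $(E_{1},E_{2})$ is a $2$-separation of $M$ unless $|E_{2}|\le 1$; since $M$ has no $2$-separation and $|E_{1}|\ge 2$, we must have $E_{2}=\{z\}$ for a single element $z$.

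It then remains to rule out the possibility $E_{2}=\{z\}$, which is where the one genuine idea enters. In this case $\{z\}$ is a connected component of the loopless matroid $M_{x,y}$, so $z$ is a coloop of $M_{x,y}$ and lies in no circuit of $M_{x,y}$. By the description of $\mathcal{C}(M_{X})$ obtained by Shikare, Azadi and Waphare \cite{SAWGS}, the circuits of $M_{x,y}$ include, among others, every circuit of $M$ that contains an even number of elements of $\{x,y\}$; hence every circuit of $M$ through $z$ contains an odd number of elements of $\{x,y\}$, and in particular meets $\{x,y\}$. Consequently $z$ lies in no circuit of $M\setminus\{x,y\}$, so $z$ is a coloop of $M\setminus\{x,y\}$ (it is not a loop, $M$ being loopless). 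Since every cocircuit of $M\setminus\{x,y\}$ is of the form $C^{*}\setminus\{x,y\}$ for some cocircuit $C^{*}$ of $M$, there is a cocircuit $C^{*}$ of $M$ with $C^{*}\setminus\{x,y\}=\{z\}$, so $z\in C^{*}\subseteq\{x,y,z\}$ and $|C^{*}|\le 3$ --- contradicting the cocircuit bound of the first paragraph. This contradiction shows $M_{x,y}$ is connected.

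The main obstacle, as I see it, is not any single hard computation but the realisation that a disconnection of $M_{x,y}$ is so tightly constrained by the rank formula of Lemma~\ref{rank} that it must arise either from a genuine low-order separation of $M$ or from a one-element component of $M_{x,y}$ --- a putative coloop ``straddling'' $x$ and $y$ --- and the latter is killed by the cogirth bound. The hypothesis $|E(M)|\ge 9$ is used only to secure that cogirth bound; $|E(M)|\ge 6$ would already suffice for the proof as sketched.
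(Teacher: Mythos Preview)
The paper does not supply its own proof of this theorem: it is quoted from Shikare~\cite{SSL} as background, with no argument given, so there is nothing in the present paper to compare your attempt against.

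For what it is worth, your argument is correct and self-contained. You take a hypothetical $1$-separation $(E_{1},E_{2})$ of $M_{x,y}$ with $x,y\in E_{1}$ (legitimate, since $\{x,y\}$ is a cocircuit of $M_{x,y}$ by Lemma~\ref{Lemma 1.3}(ii) and a cocircuit lies in a single component), and use Lemma~\ref{rank} to deduce $\lambda_{M}(E_{1})\le 1$. Four-connectivity then forces $|E_{2}|=1$, and you eliminate the residual case $E_{2}=\{z\}$ by observing that $z$ becomes a coloop of $M\setminus\{x,y\}$, whence some cocircuit of $M$ is contained in $\{x,y,z\}$ --- impossible since Lemma~\ref{ncnd} gives cogirth at least $4$. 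Your closing remark that $|E(M)|\ge 6$ already suffices is also right: that is precisely the bound $2(n-1)$ of Lemma~\ref{ncnd} for $n=4$, and nothing else in your proof uses the size of $E(M)$.
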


Borse and Dhotre \cite{BDCSM} strengthened Shikare's result by proving that $M_{x,y}$ is connected for every $x, y \in E(M)$ whenever $M$ is connected and vertically $3$-connected with cogirth at least $4$ and girth at least $3$.

\begin{thm} Let $M$ be a connected and vertically $3$-connected binary matroid and $x, y$ be distinct elements of $M$. Suppose that every cocircuit $Q$ of $M$ containing $x, y$ is of size at least $4$ and further, $Q$ does not contain a $2$-circuit of $M$. Then $M_{x,y}$ is  connected binary matroid. \end{thm}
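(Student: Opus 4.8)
The plan is to argue by contradiction, combining the rank criterion for connectivity with Lemma~\ref{rank} and Lemma~\ref{Lemma 1.3}. Since no cocircuit of $M$ through $x$ and $y$ has size $2$, the pair $\{x,y\}$ is not a cocircuit, so $x$ and $y$ are not in series in $M$ and $r'(M_{x,y})=r(M)+1$ by Lemma~\ref{Lemma 1.3}(iii). Taking $X=\{x,y\}$ in Lemma~\ref{rank}, an $OX$-circuit is exactly a circuit of $M$ meeting $\{x,y\}$ in precisely one element, and $r'(A)=r(A)+1$ if $A\subseteq E(M)$ contains such a circuit and $r'(A)=r(A)$ otherwise. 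Now suppose $M_{x,y}$ is disconnected, and fix a partition $(E_1,E_2)$ of $E(M)$ into nonempty sets with $r'(E_1)+r'(E_2)=r'(M_{x,y})=r(M)+1$. As $M$ is connected, $r(E_1)+r(E_2)\ge r(M)+1$, and $r'(E_i)\ge r(E_i)$; comparing with the displayed equality forces $r(E_1)+r(E_2)=r(M)+1$ and $r'(E_i)=r(E_i)$ for $i=1,2$. Hence neither $E_1$ nor $E_2$ contains an $OX$-circuit; call this property $(\star)$.

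Next I invoke vertical $3$-connectivity. Since $r(E_1)+r(E_2)-r(M)=1$, the partition $(E_1,E_2)$ would be a vertical $2$-separation of $M$ were both parts of rank at least $2$; as $M$ admits none and is loopless, one part, say $S\in\{E_1,E_2\}$, has rank $1$, and then the complementary part $L$ has rank $r(M)$. If $r(M)=1$ the only cocircuit of $M$ is $E(M)$, which contains the $2$-circuit $\{x,y\}$, contrary to hypothesis; so $r(M)\ge2$ and $S,L$ are determined. I will use two standard facts about the connected (loopless, coloopless) binary matroid $M$: \textbf{(F1)} any two elements lie in a common cocircuit, and any nonempty cocycle of $M$ contained in $\{x,y\}$ must equal $\{x,y\}$ and be a cocircuit, since $x,y$ are not coloops; and \textbf{(F2)} if $T\subseteq E(M)\setminus\{x,y\}$ and $x$ lies in no circuit of $M$ disjoint from $T\cup\{y\}$, then $x$ is a coloop of $M\setminus(T\cup\{y\})$, so $M$ has a cocircuit $D$ with $x\in D\subseteq T\cup\{x,y\}$ (and symmetrically with $x$ and $y$ exchanged).

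The argument then breaks into three cases by the location of $x$ and $y$. \emph{If $x,y\in S$:} then $x\parallel y$ since $r(S)=1$; if $|S|\ge3$, then $\{x,s\}$ with $s\in S\setminus\{x,y\}$ is an $OX$-circuit inside $S$, contradicting $(\star)$, while if $S=\{x,y\}$ then by (F1) some cocircuit contains $x$ and $y$, hence the $2$-circuit $\{x,y\}$, contradicting the hypothesis. \emph{If exactly one of $x,y$, say $y$, lies in $S$:} when $|S|\ge2$, $y$ is parallel to some $s\in S$, so any cocircuit through $x$ and $y$ (one exists by (F1)) contains $s$ and hence the $2$-circuit $\{y,s\}$, impossible; thus $S=\{y\}$, and then $(\star)$ says $x$ lies in no circuit disjoint from $\{y\}$, so (F2) with $T=\emptyset$ yields a cocircuit $D$ with $x\in D\subseteq\{x,y\}$, forcing $D=\{x,y\}$ — a cocircuit of size $2$ through $x,y$, a contradiction. \emph{If $x,y\in L$:} by $(\star)$ no circuit of $M$ disjoint from $S$ meets $\{x,y\}$ in exactly one point, so (F2) with $T=S$ applied to $x$ and to $y$ produces cocircuits $D,D'$ of $M$ with $x\in D\subseteq S\cup\{x,y\}$ and $y\in D'\subseteq S\cup\{x,y\}$. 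Any two elements of $S$ are parallel, so $D\cap S$ and $D'\cap S$ each equal $\emptyset$ or $S$; since $x,y$ are not coloops this leaves $D\in\{\{x,y\},\,S\cup\{x\},\,S\cup\{x,y\}\}$ and $D'\in\{\{x,y\},\,S\cup\{y\},\,S\cup\{x,y\}\}$. But $\{x,y\}$ and $S\cup\{x,y\}$ are cocircuits through both $x$ and $y$ of size $2$, respectively of size $3$ when $|S|=1$ and containing a $2$-circuit when $|S|\ge2$ — all forbidden. Hence $D=S\cup\{x\}$ and $D'=S\cup\{y\}$, so $D\triangle D'=\{x,y\}$ is a nonempty cocycle, which by (F1) is a cocircuit of size $2$ through $x$ and $y$ — again a contradiction.

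Since every case contradicts the hypotheses, no disconnecting partition exists, so $M_{x,y}$ is connected. I expect the last case ($x,y\in L$) to be the main obstacle: the delicate point is to convert the absence of an $OX$-circuit inside the rank-$r(M)$ part $L$ into coloop statements in the minor $M\setminus S$, and then to use the \emph{$2$-circuit-free} clause of the hypothesis — not merely the size bound — to eliminate every candidate cocircuit except the forbidden pair $\{x,y\}$. Elsewhere the main care needed is to absorb the degenerate configurations $r(M)=1$ and $|S|\le2$.
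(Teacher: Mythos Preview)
The paper does not actually prove this statement: it is quoted in the introduction as the main result of Borse and Dhotre \cite{BDCSM}, and no argument for it is supplied here. So there is nothing in the present paper to compare your proof against.

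That said, your argument is sound and self-contained. The key steps --- that a $1$-separation $(E_1,E_2)$ of $M_{x,y}$ forces $r(E_1)+r(E_2)=r(M)+1$ with neither side containing an $OX$-circuit, that vertical $3$-connectedness then pins one side to a rank-$1$ parallel class $S$, and the subsequent case analysis on the location of $x,y$ relative to $S$ --- are all correct. Your fact (F2) is just the standard ``coloop in a deletion gives a cocircuit contained in the deleted set plus that element'' principle, and in the last case the binary-matroid identity that $D\bigtriangleup D'$ is a cocycle cleanly produces the forbidden $2$-cocircuit $\{x,y\}$. The degenerate subcases ($r(M)=1$, $|S|\le 2$) are handled correctly as well, and you use both clauses of the hypothesis (size $\ge 4$ and no $2$-circuit inside $Q$) exactly where they are needed.
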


The following result provides a necessary condition for a matroid to be $n$-connected (see \cite{Oxley}).
\begin{lem}\label{ncnd} If $M$ is an $n$-connected matroid and $|E(M)| \geq 2(n-1)$ then all circuits and all cocircuits of $M$ have at least $n$ elements. \end{lem}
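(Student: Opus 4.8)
The plan is to argue by contradiction directly from Tutte's definition of $n$-connectivity via $k$-separations, handling circuits first and then obtaining the cocircuit statement by duality. Recall that a partition $(X, Y)$ of $E(M)$ is a $k$-separation if $\min\{|X|,|Y|\} \ge k$ and $r(X) + r(Y) - r(M) \le k-1$, and that $M$ is $n$-connected exactly when it admits no $k$-separation for any $k$ with $1 \le k \le n-1$. Thus it is enough to show that a circuit of size at most $n-1$ would produce such a forbidden separation. The cocircuit statement then comes for free: a cocircuit of $M$ is a circuit of $M^{*}$, the two matroids share a ground set, and the connectivity function is self-dual, i.e. $\lambda_{M}(X) = \lambda_{M^{*}}(X)$ for all $X \subseteq E(M)$, so $M^{*}$ is $n$-connected with $|E(M^{*})| = |E(M)| \ge 2(n-1)$, and the circuit argument applies verbatim to $M^{*}$.

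For the circuit case, suppose $C$ is a circuit of $M$ with $|C| = k \le n-1$, and consider the partition $(C, E(M) \setminus C)$. First I would check the size requirement: $|C| = k$, and since $|E(M)| \ge 2(n-1) \ge 2k$ we get $|E(M) \setminus C| = |E(M)| - k \ge k$, so each side has at least $k$ elements (in particular the complement is nonempty). Then I would estimate the connectivity: because $C$ is a circuit, $r(C) = |C| - 1 = k - 1$, while $r(E(M) \setminus C) \le r(M)$ by monotonicity of the rank function, so $r(C) + r(E(M) \setminus C) - r(M) \le (k-1) + r(M) - r(M) = k - 1$. Hence $(C, E(M) \setminus C)$ is a $k$-separation of $M$ with $k \le n-1$, contradicting $n$-connectivity. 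Therefore every circuit of $M$ has at least $n$ elements, and by the dual remark above so does every cocircuit.

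The one point that genuinely uses the hypothesis — and the only place anything can go wrong — is the size check in the second step: the bound $|E(M)| \ge 2(n-1)$ is precisely what guarantees that the complement $E(M) \setminus C$ still has at least $k$ elements, so that $(C, E(M) \setminus C)$ really qualifies as a $k$-separation. Without it, a circuit could occupy almost all of $E(M)$, leaving a complement that is too small, and no contradiction would arise; this is exactly why the constant $2(n-1)$ appears in the statement. Everything else reduces to the rank axioms and the self-duality of $\lambda$, so there is no deeper obstacle beyond setting up the separation correctly.
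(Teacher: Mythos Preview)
Your argument is correct and is essentially the standard proof (this is Proposition~8.2.7 in Oxley). Note, however, that the paper does not supply its own proof of this lemma at all: it is simply quoted from \cite{Oxley}, so there is nothing in the paper to compare your approach against beyond observing that your proof matches the textbook one it cites.
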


The generalized splitting operation on a connected binary matroid, in general, may not yield a connected binary matroid. If $M$ is a connected binary matroid and $|X| < 2$ then $X$ will be a cocircuit of $M_X$ of size less than $2$ (see \cite{AM}). Therefore, by Lemma \ref{ncnd}, $M_X$ is not connected. In the following Theorem, Malavdkar et.al \cite{MDSCESM} characterized $n$-connected binary matroids which yields $n$-connected binary matroids under generalized splitting operation.

\begin{thm}\label{ncsnc} Let $M$ be an $n$-connected and vertically $(n+1)$-connected binary matroid, $n \geq 2$, $|E(M)| \geq 2(n-1)$ and girth of $M$ is at least $n+1$. Let $X\subset E(M)$ with $|X| \geq n$. Then $M_X$ is $n$-connected if and only if for any $(n-1)$-element subset $S$ of $E(M)$ there is an $OX$-circuit $C$ of $M$ such that $S \cap C = \phi$.\end{thm}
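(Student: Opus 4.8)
\noindent The plan is to reduce both sides of the equivalence to statements about the Tutte connectivity function and the ranks of small sets. For a matroid $N$ on $E$ write $\lambda_N(A)=r_N(A)+r_N(E\setminus A)-r_N(E)$, so that $N$ is $n$-connected exactly when there is no partition $(A,B)$ of $E$ with $\min\{|A|,|B|\}\ge k$ and $\lambda_N(A)\le k-1$ for some $k\in\{1,\dots,n-1\}$. For $Y\subseteq E$ put $\epsilon(Y)=1$ if $Y$ contains an $OX$-circuit of $M$ and $\epsilon(Y)=0$ otherwise. By Lemma~\ref{rank}, $r'(Y)=r(Y)+\epsilon(Y)$ for all $Y$, and therefore
\begin{equation}\label{eq:lam}
\lambda_{M_X}(A)=\lambda_M(A)+\epsilon(A)+\epsilon(E\setminus A)-\epsilon(E)\qquad(A\subseteq E).
\end{equation}
Throughout I assume $E$ contains an $OX$-circuit, equivalently $\epsilon(E)=1$ and $M_X\neq M$ (by Lemma~\ref{rank}); in the contrary case $M_X=M$ is automatically $n$-connected while the right-hand condition may fail, so some such hypothesis is needed. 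Under this assumption \eqref{eq:lam} becomes $\lambda_{M_X}(A)=\lambda_M(A)+\epsilon(A)+\epsilon(E\setminus A)-1$.

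For the ``only if'' direction I would argue contrapositively. Suppose the transversal condition fails: there is an $(n-1)$-element set $S$ meeting every $OX$-circuit, i.e.\ $\epsilon(E\setminus S)=0$. Since $|E|\ge 2(n-1)$ we have $|S|=n-1$ and $|E\setminus S|\ge n-1$, so it suffices to show $\lambda_{M_X}(S)\le n-2$: then $(S,E\setminus S)$ is an $(n-1)$-separation and $M_X$ is not $n$-connected. By \eqref{eq:lam}, $\lambda_{M_X}(S)=\lambda_M(S)+\epsilon(S)-1$. If $\epsilon(S)=1$ then $\lambda_{M_X}(S)=\lambda_M(S)$, and $S$ contains a circuit, so $r(S)\le|S|-1=n-2$ and hence $\lambda_M(S)\le r(S)\le n-2$; if $\epsilon(S)=0$ then $\lambda_{M_X}(S)=\lambda_M(S)-1\le r(S)-1\le n-2$. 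In both cases $\lambda_{M_X}(S)\le n-2$. This half uses only $|E|\ge 2(n-1)$ and the trivial bound $\lambda_M(S)\le r(S)\le|S|$.

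For the ``if'' direction, assume the transversal condition and suppose, for a contradiction, that $M_X$ has a $k$-separation $(A,B)$ with $1\le k\le n-1$; thus $|A|,|B|\ge k$ and $\lambda_{M_X}(A)\le k-1$. Step one: since $M$ is $n$-connected and $k\le n-1$, we have $\lambda_M(A)\ge k$ (otherwise $(A,B)$ would be a $k$-separation of $M$); combining this with \eqref{eq:lam} and $\lambda_{M_X}(A)\le k-1$ forces $\lambda_M(A)=k$ and $\epsilon(A)=\epsilon(B)=0$. Step two: if $|A|\le n-1$, enlarge $A$ to an $(n-1)$-element set $S\supseteq A$ (possible as $|E|\ge 2(n-1)$) and use the hypothesis to get an $OX$-circuit disjoint from $S$, hence contained in $B$ --- contradicting $\epsilon(B)=0$; so $|A|\ge n$, and, up to swapping $A$ and $B$, likewise $|B|\ge n$. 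Step three: as $\lambda_M(A)=k\le n-1$ and $k+1\le n$, the partition $(A,B)$ is not a vertical $(k+1)$-separation of $M$ (vertical $(n+1)$-connectivity excludes vertical $\ell$-separations for $\ell\le n$), so $\min\{r(A),r(B)\}\le k$; say $r(A)\le k$. Then $\lambda_M(A)=r(A)+r(B)-r(M)=k$ together with $r(B)\le r(M)$ forces $r(A)=k$. Step four: now $|A|\ge n>k=r(A)$, so a basis $S_A$ of $A$ is a proper subset of $A$, and for any $e\in A\setminus S_A$ the fundamental circuit of $e$ with respect to $S_A$ is a circuit of $M$ of size at most $k+1\le n$, contradicting the girth being at least $n+1$. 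Hence $M_X$ is $n$-connected.

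The ``only if'' direction is essentially bookkeeping; the substance is in the ``if'' direction. The step I expect to be most delicate is the whole chain forced by $\epsilon(A)=\epsilon(B)=0$: this is precisely the configuration in which splitting can lower connectivity, and eliminating it is where the three structural assumptions on $M$ get used in turn --- $n$-connectivity to pin $\lambda_M(A)=k$, the transversal hypothesis to force $|A|,|B|\ge n$, vertical $(n+1)$-connectivity to bound one rank by $k$, and girth $\ge n+1$ to finish via a short fundamental circuit. I would also verify the small cases ($n=2$, $|E|=2(n-1)$, $k=n-1$) directly, though the inequalities above are written to be uniform in $k$ and $n$.
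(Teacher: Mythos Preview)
The paper does not prove Theorem~\ref{ncsnc}; it is quoted without proof from \cite{MDSCESM}, so there is no in-paper argument to compare your proposal against. On its own terms your proof is correct and cleanly organized: the identity $\lambda_{M_X}(A)=\lambda_M(A)+\epsilon(A)+\epsilon(B)-\epsilon(E)$ obtained from Lemma~\ref{rank} is exactly the right tool, and the four-step elimination in the ``if'' direction uses $n$-connectivity, the transversal hypothesis, vertical $(n+1)$-connectivity, and the girth bound each precisely once and in the only workable order. Your caveat about the standing assumption $\epsilon(E)=1$ is also well taken: if $X$ lies in the cocycle space of $M$ (for instance $M=M(K_4)$, $n=2$, and $X$ the three edges at a vertex, which satisfies every stated hypothesis), then there are no $OX$-circuits, $M_X=M$ is $n$-connected, yet the right-hand condition fails vacuously --- so the biconditional as printed does need that extra hypothesis, and you are right to flag it.
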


\section{Disconnected Binary Matroids and the Splitting Operation}

It has been studied that the splitting matroid of a binary connected matroid need not be connected. Interestingly, it is  true that the splitting matroid of a disconnected matroid may be connected. In the following lemma, we characterize those binary disconnected matroids whose splitting matroid is connected.

\begin{lem}  Let $M$ be a binary disconnected matroid, the elements $x,y \in E(M)$  and $x$ , $y$ are not in a 2-cocircuit of $M$. Then $M_{x,y}$ is connected if and only if $M$ has exactly two components each of which contains either $x$ or $y.$ \end{lem}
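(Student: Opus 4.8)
The plan is to argue directly from the circuit description of $M_{x,y}$, namely the lemma expressing $\mathcal{C}(M_X)$ as $\mathcal{C}_0\cup\mathcal{C}_1$ specialised to $X=\{x,y\}$, together with the standard fact that a loopless, coloopless matroid is connected exactly when every two of its elements lie in a common circuit, the components being the classes of this relation. With $X=\{x,y\}$, an $OX$-circuit is just a circuit of $M$ meeting $\{x,y\}$ in exactly one element, and $\mathcal{C}_0$ consists of the circuits of $M$ containing both or neither of $x,y$. I would first record the elementary remark that, since a cocircuit of a direct sum lies inside a single component, the hypothesis that $x$ and $y$ share no $2$-cocircuit is automatic once $x$ and $y$ lie in distinct components; in particular it excludes the degenerate situation $M_{x,y}=M$.

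For the ``only if'' direction, suppose $M_{x,y}$ is connected and let $M_1$ be the component of $M$ containing $x$. Two configurations have to be ruled out. If $y\in E(M_1)$ as well, then every $OX$-circuit lies in $E(M_1)$, hence every member of $\mathcal{C}_1$ lies in $E(M_1)$, while every member of $\mathcal{C}_0$ lies within a single component of $M$; thus no circuit of $M_{x,y}$ meets two components of $M$, contradicting connectedness since $M$ is disconnected. So $y$ lies in a second component, say $M_2$. If $M$ had a third component $M_3$, the same bookkeeping shows that each member of $\mathcal{C}_1$ lies in $E(M_1)\cup E(M_2)$ (of the two disjoint $OX$-circuits forming it, one meets $x$ and lies in $M_1$, the other meets $y$ and lies in $M_2$), so no circuit of $M_{x,y}$ joins $E(M_3)$ to the rest, again a contradiction. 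Hence $M$ has exactly two components, one containing $x$ and one containing $y$.

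For the ``if'' direction, write $M=M_1\oplus M_2$ with $x\in E(M_1)$ and $y\in E(M_2)$; each $M_i$ is connected with at least two elements since $M$ is loopless and coloopless, so any two of its elements lie in a common circuit. Given distinct $a,b\in E(M)$ I would exhibit a circuit of $M_{x,y}$ through both. If $a,b\in E(M_1)$, take a circuit $C$ of $M_1$ through $a$ and $b$: if $x\notin C$ then $C\in\mathcal{C}_0$ and we are done, and if $x\in C$ take a circuit $C'$ of $M_2$ through $y$ and use $C\cup C'$. The case $a,b\in E(M_2)$ is symmetric. If $a\in E(M_1)$ and $b\in E(M_2)$, take a circuit $C_1$ of $M_1$ through $a$ and $x$ and a circuit $C_2$ of $M_2$ through $b$ and $y$, and use $C_1\cup C_2$. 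In each case the relevant union is a member of the family defining $\mathcal{C}_1$; one must check that it is a \emph{minimal} such member and contains no member of $\mathcal{C}_0$, but this follows because any circuit of $M$ contained in $C_1\cup C_2$, with $C_1\subseteq E(M_1)$ and $C_2\subseteq E(M_2)$, is connected, hence lies in one component, hence equals $C_1$ or $C_2$ — neither of which is in $\mathcal{C}_0$. Thus every pair of elements of $M_{x,y}$ lies in a common circuit, so $M_{x,y}$ is connected.

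The only genuine obstacle is the verification just indicated — that the constructed unions $C_1\cup C_2$ really land in $\mathcal{C}_1$ (minimality, and the ``no $\mathcal{C}_0$-member inside'' clause) and that no member of $\mathcal{C}_1$ can spread over three components — and all of it reduces to the single fact that a circuit of a matroid, being connected, lies in one component and cannot properly contain another circuit. Everything else is routine once the circuit characterisation of $M_{x,y}$ is in hand.
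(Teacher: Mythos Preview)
Your argument is correct. The ``only if'' direction is essentially the paper's argument recast in slightly more structural language: the paper picks a specific circuit of $M_{x,y}$ through a would-be isolated component and decomposes it, while you classify once and for all where every circuit of $M_{x,y}$ can live; the content is the same.

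The ``if'' direction, however, is genuinely different. The paper simply invokes Lemma~2.5 of \cite{SSL} and stops, whereas you give a self-contained proof directly from the circuit description of $M_{x,y}$: for each pair $a,b$ you manufacture an explicit member of $\mathcal{C}_0$ or $\mathcal{C}_1$ through both, and the verification that the unions $C_1\cup C_2$ are actually minimal members of the defining family (and contain no $\mathcal{C}_0$-member) is exactly the point where the direct-sum structure does the work, via the fact that any circuit of $M$ inside $C_1\cup C_2$ must lie in a single summand and hence coincide with $C_1$ or $C_2$. Your approach buys independence from the cited lemma and makes transparent precisely why two components suffice; the paper's approach is shorter on the page but defers the substance elsewhere.
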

	
\begin{proof} Suppose that $M_{x,y}$ is connected. Let $D$ be a	 component of $M$ such that $D \cap \{x,y\}= \phi.$ Let $z \in D$		and $C$ be a circuit in $M_{x,y}$ containing $z$ and $x$. But then $y \in C$. Therefore, either $C$ is a circuit of $M$ or it is the		union of two disjoint circuits $C_x$ and $C_y$ (where $C_x$ is a circuit containing $x$ but not $y$ and $C_y$ is a circuit containing $y$ but not $x$ and $C_x\cup C_y$ contains no circuit of $M$ containing both $x, y$ or neither or a circuit of $M$, $C_{xy}$ containing both $x, y$. Thus, we get a circuit in $M$ containing $z$ and one of the $x$ and $y$. Thus $x$ or $y$ is in $D,$ a contradiction to the fact that $D \cap \{x,y\}=\phi.$ Further, if $x \in D_1,y \in D_2$ and there exists another component $D_3$ such that $D_3 \neq D_1 \neq D_2.$ But then either $x$ or $y \in D_3.$ This means either $D_3 = D_1$ or $D_3=D_2.$ This implies that $M$ has at most two components. 

Conversely, if $x \in D_1, y \in D_2$ and $D_1,D_2$ are two components of $M$. Then, by Lemma 2.5 of \cite{SSL}, $M_{x,y}$ is connected.\end{proof}
		
\begin{lem}\label{components} Let $D_1,D_2,D_3,\cdots, D_{t-1},D_t$ be components of a matroid $M$. Then $r(D_1)+r(D_2)+r(D_3)+ \cdots + r(D_{t-1})+r(D_t)=r(M).$\end{lem}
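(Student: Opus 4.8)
The plan is to argue by induction on the number $t$ of components, the case $t=1$ being vacuous. The crux is the two-block statement: if the ground set of a matroid $N$ partitions as $E(N) = E_1 \cup E_2$ with $E_1 \cap E_2 = \emptyset$ and no circuit of $N$ meets both $E_1$ and $E_2$, then $r(N) = r(E_1) + r(E_2)$. Granting this, I would apply it with $N = M$, $E_1 = D_1 \cup \cdots \cup D_{t-1}$ and $E_2 = D_t$; since every circuit of $M$ lies inside a single component, no circuit meets both $E_1$ and $E_2$, whence $r(M) = r(D_1 \cup \cdots \cup D_{t-1}) + r(D_t)$. The restriction $M|E_1$ has components exactly $D_1,\dots,D_{t-1}$, so the induction hypothesis gives $r(E_1) = r(D_1) + \cdots + r(D_{t-1})$, and combining the two identities yields the claim.

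For the two-block statement I would glue bases. Let $B_1$ be a basis of $N|E_1$ and $B_2$ a basis of $N|E_2$, and put $B = B_1 \cup B_2$. First, $B$ is independent in $N$: otherwise it contains a circuit $C$, and since $C$ cannot meet both $E_1$ and $E_2$ we have $C \subseteq E_1$ or $C \subseteq E_2$, contradicting independence of $B_1$ or of $B_2$. Second, $B$ spans $N$: every $x \in E_1$ lies in $\mathrm{cl}_N(B_1) \subseteq \mathrm{cl}_N(B)$ and every $x \in E_2$ lies in $\mathrm{cl}_N(B_2) \subseteq \mathrm{cl}_N(B)$, so $\mathrm{cl}_N(B) = E(N)$. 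Hence $B$ is a basis of $N$ and, the union being disjoint, $r(N) = |B| = |B_1| + |B_2| = r(E_1) + r(E_2)$.

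There is no real obstacle here, only bookkeeping in the inductive step: one checks that the components of $M|(D_1\cup\cdots\cup D_{t-1})$ are precisely $D_1,\dots,D_{t-1}$ (clear, since membership in a common circuit is unaffected by deleting the elements of $D_t$, none of which shares a circuit with the rest), and uses that each $D_i$ is non-empty by definition of a component. Alternatively, one can skip the induction and simply observe that $M = (M|D_1) \oplus \cdots \oplus (M|D_t)$ is the direct sum of its components and that rank is additive over direct sums; the basis-gluing argument above is precisely the elementary proof of that additivity.
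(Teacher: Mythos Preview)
Your proof is correct and rests on the same idea as the paper's: take a basis of each component and show their disjoint union is a basis of $M$. The only difference is packaging---the paper glues all $t$ bases at once and checks maximality directly (any new element $e$ lies in some $D_k$, so $B_k\cup\{e\}$ contains a circuit), whereas you wrap the same basis-gluing step in an induction on $t$ and verify spanning via closure; your closing ``alternatively'' remark is precisely the paper's route.
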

\begin{proof} Let $B_i$ be the basis of $D_i$. Since there is no circuit containing one element of $D_i$ and one element of $D_j, i\neq j$, $~B_i \cup B_j$ is an independent set of $M$. Thus, $B=B_1 \cup B_2 \cup B_3 \cup \cdots \cup B_{t-1} \cup B_t$ is independent set in $M$. Let $e\in E(M)$. Then $B\cup e$ is not independent, as $B_k \cup e$ contains a circuit $C$ of	$M$ for some $k \in \{1,2,...,t-1,t\}$ and so $C$ is a circuit in $B\cup e$. Hence $B$ is maximal independent set. That is, $B$ is a basis of $M$.
	
Therefore, $r(M)=r(B_1 \cup B_2 \cup B_3 \cup \cdots B_{t-1} \cup B_t)$\\
			$=|B_1+ B_2+B_3+ \cdots + B_{t-1}+ B_t|$\\
			$=|B_1|+ |B_2|+ \cdots + |B_{t-1}|+ |B_t|$\\
			$=r(D_1)+r(D_2)+r(D_3) + \cdots + r(D_{t-1})+ r(D_t)$\end{proof}
		
\begin{lem} Let $M$ be a connected matroid and $x,y \in E(M)$. Suppose $x$ is not parallel to $y$. If $M \setminus \{x,y\}$ is connected, then $M_{x,y}$ is connected. \end{lem}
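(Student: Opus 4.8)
The plan is to combine the circuit description of the splitting matroid with the standard fact that a matroid is connected exactly when the relation ``$a=b$, or $a$ and $b$ lie on a common circuit'' has a single equivalence class, i.e.\ the matroid has exactly one component (see \cite{Oxley}).

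First I would show that any two elements of $E(M)\setminus\{x,y\}$ lie on a common circuit of $M_{x,y}$. Given $a,b\in E(M)\setminus\{x,y\}$, the connectedness of $M\setminus\{x,y\}$ yields a circuit $C_{ab}$ of $M\setminus\{x,y\}$ containing both $a$ and $b$; this $C_{ab}$ is a circuit of $M$ disjoint from $\{x,y\}$, hence contains an even number (namely zero) of elements of $\{x,y\}$, so $C_{ab}\in\mathcal{C}_0\subseteq\mathcal{C}(M_{x,y})$ by the circuit characterisation of the splitting matroid. Consequently all of $E(M)\setminus\{x,y\}$ lies in a single component of $M_{x,y}$.

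Next I would attach $x$ and $y$ to that component. Since $M$ is connected there is a circuit $C$ of $M$ with $x,y\in C$, and since $x$ is not parallel to $y$ we get $|C|\ge 3$, so $C$ meets $E(M)\setminus\{x,y\}$ in some element $e$ (note $E(M)\setminus\{x,y\}\neq\emptyset$: a connected matroid on at most two elements on which $x$ and $y$ are distinct and non-parallel cannot exist). The circuit $C$ contains exactly two elements of $\{x,y\}$, an even number, so $C\in\mathcal{C}_0\subseteq\mathcal{C}(M_{x,y})$. Thus $x$, $y$ and $e$ lie on the common circuit $C$ of $M_{x,y}$, placing $x$ and $y$ in the same component of $M_{x,y}$ as $e$, hence in the same component as every element of $E(M)\setminus\{x,y\}$ by the previous paragraph. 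Therefore $M_{x,y}$ has exactly one component and is connected.

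I do not expect a serious obstacle; the points requiring care are (i) confirming the hypotheses genuinely force $|E(M)|\ge 3$ so that the witness $e$ exists, and (ii) invoking correctly the equivalence between connectedness and the single-component description via common circuits (equivalently, the transitivity of the ``common circuit'' relation, which is what silently glues $x\sim e$ and $e\sim a$ into $x\sim a$ for an arbitrary $a\in E(M)\setminus\{x,y\}$, should one prefer to avoid the word ``component'' and argue element-by-element instead).
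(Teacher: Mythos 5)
Your proof is correct and follows essentially the same route as the paper's: show that circuits avoiding $\{x,y\}$ survive in $M_{x,y}$ (so $E(M)\setminus\{x,y\}$ sits in one component), then glue in $x$ and $y$ via a circuit of $M$ through both, which is preserved because it meets $\{x,y\}$ evenly and meets the rest because non-parallelism forces $|C|\ge 3$. Your version is in fact slightly more explicit than the paper's (invoking $\mathcal{C}_0$ and checking $E(M)\setminus\{x,y\}\neq\emptyset$), but it is not a different argument.
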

\begin{proof} Let $D=M \setminus \{x,y\}$. Then $D$ is connected. Now $M \setminus \{x,y\}= M_{x,y} \setminus \{x,y\}$ and $D$ is contained in a component of $M_{x,y}$. If there exists a circuit of $M$ containing $x$ and $y$, then it is	preserved in $M_{x,y}.$ Since $x$ and $y$ are not parallel, there is a circuit containing $x$ and $y$ that intersects $D$. Hence, $D$ and $x,y$ must belong to same component of $M_{x,y}$. That is $M_{x,y}$ has only one component. Thus $M_{x,y}$ is connected.\end{proof}
		
\begin{rem} If $x$ is parallel to $y$, and $M$ and $M \setminus \{x,y\}$  are connected,  $M_{x,y}$ need not be connected.\end{rem} We have given the following example to show this.
		
Consider the cycle matroid $M(G)$ of graph $G$ shown in Figure 1. We know that the matroid $M(G)$ with at least 2 elements is connected if $G$ is 2-connected and $(M(G))_{x,y}=M(G_{x,y}).$

In Figure 1, $G-\{x,y\}$ is 2-connected but $G_{x,y}$ is not 2-connected. Hence $M(G_{x,y})$ is not connected.
		
\begin{defn} Let $M$ be a matroid and $x,y \in E(M)$ then $x$ and $y$ are in series if $x$ and $y$ form a 2-cocircuit\end{defn}

\begin{rem} The converse of the above Lemma is not true. The splitting matroid $M_{x,y}$ of $M$ is connected though $M\setminus \{x,y\}$ is not connected. Consider the cycle matroid $M(G)$ of graph $G$ shown in Figure 2. The graphs corresponding to $(M(G))_{x,y}$ and $(M(G)\setminus \{x,y\}$ are $G_{x,y}$ and $G-\{x,y\}$, respectively .\end{rem}

\begin{figure}
			\unitlength 1mm % = 2.845pt
			\linethickness{0.4pt}
			\ifx\plotpoint\undefined\newsavebox{\plotpoint}\fi % GNUPLOT compatibility
			\begin{picture}(100.75,44.125)(25,0)
			\put(50.5,15.67){\circle*{1.33}}
			\put(88.25,15.92){\circle*{1.33}}
			\put(129.25,15.17){\circle*{1.33}}
			\put(69.17,15.33){\circle*{1.33}}
			\put(106.92,15.58){\circle*{1.33}}
			\put(147.92,14.83){\circle*{1.33}}
			\put(69.17,32){\circle*{1.33}}
			\put(106.92,32.25){\circle*{1.33}}
			\put(147.92,31.5){\circle*{1.33}}
			\put(50.5,31.67){\circle*{1.33}}
			\put(88.25,31.92){\circle*{1.33}}
			\put(129.25,31.17){\circle*{1.33}}
			\put(137.25,40.42){\circle*{1.33}}
			\put(69.17,32.33){\line(0,0){0}}
			\put(106.92,32.58){\line(0,0){0}}
			\put(147.92,31.83){\line(0,0){0}}
			\put(69.17,32.33){\line(0,-1){17}}
			\put(106.92,32.58){\line(0,-1){17}}
			\put(147.92,31.83){\line(0,-1){17}}
			\put(69.17,15.33){\line(0,0){0}}
			\put(106.92,15.58){\line(0,0){0}}
			\put(147.92,14.83){\line(0,0){0}}
			\put(69.17,15.33){\line(-1,0){18.33}}
			\put(106.92,15.58){\line(-1,0){18.33}}
			\put(147.92,14.83){\line(-1,0){18.33}}
			\put(50.83,15.33){\line(0,0){0}}
			\put(88.58,15.58){\line(0,0){0}}
			\put(129.58,14.83){\line(0,0){0}}
			\put(50.83,15.33){\line(0,1){16.67}}
			\put(88.58,15.58){\line(0,1){16.67}}
			\put(129.58,14.83){\line(0,1){16.67}}
			\put(50.83,32){\line(0,0){0}}
			\put(88.58,32.25){\line(0,0){0}}
			\put(129.58,31.5){\line(0,0){0}}
			\put(59.83,39.33){\line(0,0){0}}
			\put(60.5,39.67){\line(0,0){0}}
			%\emline(51,31.25)(69.25,15)
			\multiput(51,31.25)(.0378630705,-.0337136929){482}{\line(1,0){.0378630705}}
			%\end
			%\emline(88.75,31.5)(107,15.25)
			\multiput(88.75,31.5)(.0378630705,-.0337136929){482}{\line(1,0){.0378630705}}
			%\end
			%\emline(129.75,30.75)(148,14.5)
			\multiput(129.75,30.75)(.0378630705,-.0337136929){482}{\line(1,0){.0378630705}}
			%\end
			%\emline(69.25,32.25)(50.75,15.75)
			\multiput(69.25,32.25)(-.037755102,-.0336734694){490}{\line(-1,0){.037755102}}
			%\end
			%\emline(107,32.5)(88.5,16)
			\multiput(107,32.5)(-.037755102,-.0336734694){490}{\line(-1,0){.037755102}}
			%\end
			%\emline(148,31.75)(129.5,15.25)
			\multiput(148,31.75)(-.037755102,-.0336734694){490}{\line(-1,0){.037755102}}
			%\end
			\put(101.75,4.75){\makebox(0,0)[cc]{\bf Figure 1}}
			\qbezier(50.5,31.5)(59.625,44.125)(69.25,32.25)
			%\emline(69.25,32.25)(50.75,31.5)
			\multiput(69.25,32.25)(-.8043478,-.0326087){23}{\line(-1,0){.8043478}}
			%\end
			\put(50.75,31.5){\line(0,1){0}}
			%\emline(137.25,40.5)(148,31.5)
			\multiput(137.25,40.5)(.0402621723,-.0337078652){267}{\line(1,0){.0402621723}}
			%\end
			\qbezier(136.75,41.25)(150.75,44.125)(147.75,31.5)
			\put(59.25,40){\makebox(0,0)[cc]{$x$}}
			\put(148.75,39.75){\makebox(0,0)[cc]{$x$}}
			\put(59.5,33.25){\makebox(0,0)[cc]{$y$}}
			\put(141.5,34){\makebox(0,0)[cc]{$y$}}
			\put(59.25,11){\makebox(0,0)[cc]{$G$}}
			\put(95.25,11.5){\makebox(0,0)[cc]{$G-\{x,y\}$}}
			\put(138.5,11.5){\makebox(0,0)[cc]{$G_{x,y}$}}
			\end{picture}
		\end{figure}
\begin{figure}
			%TeXCAD Picture [connectivity 2.bak]. Options:
			%\grade{\on}
			%\emlines{\off}
			%\epic{\off}
			%\beziermacro{\on}
			%\reduce{\on}
			%\snapping{\off}
			%\quality{8.000}
			%\graddiff{0.005}
			%\snapasp{1}
			%\zoom{4.0000}
			\unitlength 1mm % = 2.845pt
			\linethickness{0.4pt}
			\ifx\plotpoint\undefined\newsavebox{\plotpoint}\fi % GNUPLOT compatibility
			\begin{picture}(148.585,39.67)(25,0)
			\put(50.5,15.67){\circle*{1.33}}
			\put(88.25,15.92){\circle*{1.33}}
			\put(129.25,15.17){\circle*{1.33}}
			\put(69.17,15.33){\circle*{1.33}}
			\put(106.92,15.58){\circle*{1.33}}
			\put(147.92,14.83){\circle*{1.33}}
			\put(69.17,32){\circle*{1.33}}
			\put(106.92,32.25){\circle*{1.33}}
			\put(147.92,31.5){\circle*{1.33}}
			\put(50.5,31.67){\circle*{1.33}}
			\put(88.25,31.92){\circle*{1.33}}
			\put(129.25,31.17){\circle*{1.33}}
			\put(69.17,32.33){\line(0,0){0}}
			\put(106.92,32.58){\line(0,0){0}}
			\put(147.92,31.83){\line(0,0){0}}
			\put(69.17,32.33){\line(0,-1){17}}
			\put(106.92,32.58){\line(0,-1){17}}
			\put(147.92,31.83){\line(0,-1){17}}
			\put(69.17,15.33){\line(0,0){0}}
			\put(106.92,15.58){\line(0,0){0}}
			\put(147.92,14.83){\line(0,0){0}}
			\put(69.17,15.33){\line(-1,0){18.33}}
			\put(147.92,14.83){\line(-1,0){18.33}}
			\put(50.83,15.33){\line(0,0){0}}
			\put(88.58,15.58){\line(0,0){0}}
			\put(129.58,14.83){\line(0,0){0}}
			\put(50.83,15.33){\line(0,1){16.67}}
			\put(88.58,15.58){\line(0,1){16.67}}
			\put(129.58,14.83){\line(0,1){16.67}}
			\put(50.83,32){\line(0,0){0}}
			\put(88.58,32.25){\line(0,0){0}}
			\put(129.58,31.5){\line(0,0){0}}
			\put(59.83,39.33){\line(0,0){0}}
			\put(60.5,39.67){\line(0,0){0}}
			\put(101.75,4.75){\makebox(0,0)[cc]{\bf Figure 2}}
			%\emline(69.25,32.25)(50.75,31.5)
			\multiput(69.25,32.25)(-.8043478,-.0326087){23}{\line(-1,0){.8043478}}
			%\end
			\put(50.75,31.5){\line(0,1){0}}
			\put(59.25,11){\makebox(0,0)[cc]{$G$}}
			\put(95.25,11.5){\makebox(0,0)[cc]{$G-\{x,y\}$}}
			\put(138.5,11.5){\makebox(0,0)[cc]{$G_{x,y}$}}
			\put(59.75,34.5){\makebox(0,0)[cc]{$x$}}
			\put(63,13.25){\makebox(0,0)[cc]{}}
			\put(61.75,12.5){\makebox(0,0)[cc]{}}
			\put(59.5,18){\makebox(0,0)[cc]{$y$}}
			%\emline(129.25,31.25)(148.25,31.5)
			\multiput(129.25,31.25)(2.375,.03125){8}{\line(1,0){2.375}}
			%\end
			\put(139,35.75){\makebox(0,0)[cc]{$x$}}
			\put(126.25,23){\makebox(0,0)[cc]{$y$}}
			\end{picture}
\end{figure}
		
In Figure 2, $G-\{x,y\}$ is disconnected while $G_{x,y}$ is 2-connected. The graphs $G$ and $G_{x,y}$ are isomorphic follows from the facts that $(M(G))_{x,y}=M(G_{x,y})$ and $x,y$ are in series in $M(G).$

		\centerline{************}

\bibliographystyle{amsplain}
%\bibliography{xbib}

\end{document}